\newcommand{\Id}{\mathrm{Id}}
\newcommand{\Comp}{\mathsf{Comp}}
\newcommand{\A}{\mathcal A}
\newcommand{\C}{\mathcal C}
\newcommand{\B}{\mathcal B}
\newcommand{\N}{\mathbb N}
\newcommand{\R}{\mathbb R}
\newcommand{\T}{\mathbb T}
\newcommand{\F}{\mathcal F}
\newcommand{\M}{\mathbb M}
\newcommand{\V}{\mathbb V}
\newtheorem{theorem}{Theorem}
\newtheorem{lemma}{Lemma}
\newtheorem{remark}{Remark}
\newtheorem{proposition}{Proposition}
\begin{document}

\title{A functional representation of the capacity multiplication monad}
\author{Taras Radul}

\maketitle

Institute of Mathematics, Casimirus the Great University of Bydgoszcz, Poland;
\newline
Department of Mechanics and Mathematics, Ivan Franko National University of Lviv,
Universytettska st., 1. 79000 Lviv, Ukraine.
\newline
e-mail: tarasradul@yahoo.co.uk

\textbf{Key words and phrases:}  Monad, capacity, fuzzy integral, triangular norm.

\subjclass[MSC 2010]{18B30, 18C15, 28E10, 54B30}

\begin{abstract}  Functional representations of the capacity monad based on the max and min operations were considered in \cite{Ra1} and  \cite{Ny1}. Nykyforchyn considered in \cite{Ny2} some alternative monad structure for the possibility capacity functor based on the max and usual multiplication operations.
We show that such capacity monad (which we call the capacity multiplication monad) has a functional representation, i.e. the space of capacities on a compactum $X$ can be naturally embedded (with preserving of the monad structure) in some space of functionals on $C(X,I)$. We also describe this space of functionals  in terms of properties of functionals.
\end{abstract}

\maketitle

\section{Introduction}
Functional representations of monads (i.e. natural embeddings into $\R^{C(X,S)}$ which preserves a monad structure where $S$ is a subset of $\R$) were considered in \cite{Ra2} and  \cite{Ra3}. Some functional representations of hyperspace monad were constructed in \cite{Ra4} and  \cite{Ra5}.

Capacities (non-additive measures, fuzzy measures) were introduced by Choquet in \cite{Ch} as a natural generalization of additive measures. They found numerous applications (see for example \cite{EK},\cite{Gil},\cite{Sch}). Categorical and topological properties of spaces of upper-semicontinuous capacities on compact Hausdorff spaces were investigated in \cite{NZ}. In particular, there was built the capacity functor which is a functorial part of a capacity monad $\M$ based on the max and min operations.

Well known is the Choquet integral, which is, in fact, some functional representation of the functor $M$, i.e., the space of capacities $MX$ can be naturally embedded in $\R^{C(X)}$. But this representation does not preserve the monad structure.
Nykyforchyn using the Sugeno integral  provided a functional representation of capacities as functionals on the space  $C(X,I)$ which preserves the monad structure \cite{Ny1}. Some modification of the Sugeno integral yields a functional representation of capacities as functionals on the space  $C(X)$ \cite{Ra1}.

Let us remark that the min operation is a triangular norm on the unit interval $I$. Another important triangular norm is the multiplication operation. Nykyforchyn build in  \cite{Ny2} a capacity monad based on the max and  multiplication operations. (Let us remark that recently Zarichnyi proposed to use triangular norms to construct monads \cite{Za}). The main aim of this paper is to find a  representation of the monad from \cite{Ny2}. We use a fuzzy integral based on the max and  multiplication operations for this purpose.

\section{Capacities and monads}
 By $\Comp$ we denote the category of compact Hausdorff
spaces (compacta) and continuous maps. For each compactum $X$ we denote by $C(X)$ the Banach space of all
continuous functions $\phi:X\to\R$ with the usual $\sup$-norm: $
\|\phi\| =\sup\{|\phi(x)|\mid x\in X\}$. We also consider on $C(X)$ the natural partial order.

In what follows, all spaces and maps are assumed to be in $\Comp$ except for $\R$,
the spaces $C(X)$ and functionals defined on $C(X)$ with $X$ compact Hausdorff.

We recall some categorical notions (see \cite{RZ} and \cite{TZ}
for more details). We define them only for the category $\Comp$. The central notion is the notion of monad (or triple) in the sense of S.Eilenberg and J.Moore.

A {\it monad} \cite{EM} $\T=(T,\eta,\mu)$ in the category
$\Comp$ consists of an endofunctor $T:{\Comp}\to{\Comp}$ and
natural transformations $\eta:\Id_{\Comp}\to T$ (unity),
$\mu:T^2\to T$ (multiplication) satisfying the relations $\mu\circ
T\eta=\mu\circ\eta T=${\bf 1}$_T$ and $\mu\circ\mu T=\mu\circ
T\mu$. (By $\Id_{\Comp}$ we denote the identity functor on the
category ${\Comp}$ and $T^2$ is the superposition $T\circ T$ of
$T$.)

Let $\T=(T,\eta,\mu)$ be a monad in the category ${\Comp}$. The
pair $(X,\xi)$ where $\xi:TX\to X$ is a map is called a $\T$-{\it
algebra} if $\xi\circ\eta X=id_X$ and $\xi\circ\mu X=\xi\circ
T\xi$. Let $(X,\xi)$, $(Y,\xi')$ be two $\T$-algebras. A map
$f:X\to Y$ is called a $\T$-algebras morphism if $\xi'\circ
Tf=f\circ\xi$.

A natural transformation $\psi:T\to T'$ is called a {\it morphism}
from a monad $\T=(T,\eta,\mu)$ into a monad $\T'=(T',\eta',\mu')$
if $\psi\circ\eta= \eta'$ and $\psi\circ\mu=\mu'\circ\eta T'\circ
T\psi$. If all of the components of $\psi$ are monomorphisms then
the monad $\T$ is called a {\it submonad} of $\T'$ and $\psi$ is
called a {\it monad embedding}.

 Let $A$ be a subset of $X$.  By $\F(X)$ we denote the family of all closed subsets of $X$. Put $I=[0,1]$.

 We follow a terminology from \cite{NZ}.
A function $\nu:\F(X)\to I$  is called an {\it upper-semicontinuous capacity} on $X$ if the three following properties hold for each closed subsets $F$ and $G$ of $X$:

1. $\nu(X)=1$, $\nu(\emptyset)=0$,

2. if $F\subset G$, then $\nu(F)\le \nu(G)$,

3. if $\nu(F)<a$, then there exists an open set $O\supset F$ such that $\nu(B)<a$ for each compactum $B\subset O$.

A capacity $\nu$ is extended in \cite{NZ} to all open subsets $U\subset X$ by the formula $\nu(U)=\sup\{\nu(K)\mid K$ is a closed subset of $X$ such that $K\subset U\}$.

It was proved in \cite{NZ} that the space $MX$ of all upper-semicontinuous  capacities on a compactum $X$ is a compactum as well, if a topology on $MX$ is defined by a subbase that consists of all sets of the form $O_-(F,a)=\{c\in MX\mid c(F)<a\}$, where $F$ is a closed subset of $X$, $a\in [0,1]$, and $O_+(U,a)=\{c\in MX\mid c(U)>a\}$, where $U$ is an open subset of $X$, $a\in [0,1]$. Since all capacities we consider here are upper-semicontinuous, in the following we call elements of $MX$ simply capacities.

A capacity $\nu\in MX$ for a compactum $X$ is called  a necessity (possibility) capacity if for each family $\{A_t\}_{t\in T}$ of closed subsets of $X$ (such that $\bigcup_{t\in T}A_t$ is a closed subset of $X$) we have $\nu(\bigcap_{t\in T}A_t)=\inf_{t\in T}\nu(A_t)$  ($\nu(\bigcup_{t\in T}A_t)=\sup_{t\in T}\nu(A_t)$). (See \cite{WK} for more details.) We denote by $M_\cap X$ ($M_\cup X$) a subspace of $MX$ consisting of all necessity (possibility) capacities. Since $X$ is compact and $\nu$ is upper-semicontinuous, $\nu\in M_\cap X$ iff $\nu$ satisfy the simpler requirement that $\nu(A\cap B)=\min\{\nu(A),\nu(B)\}$. 

If $\nu$ is a capacity on a compactum $X$, then  the function $\kappa X(\nu)$, that is defined on the family $\F(X)$  by the formula $\kappa X(\nu)(F) = 1-\nu (X\setminus F)$, is a capacity as well. It is called the dual
capacity (or conjugate capacity ) to $\nu$. The mapping $\kappa X : MX \to MX$ is a homeomorphism and an involution \cite{NZ}. Moreover, $\nu$ is a necessity capacity if and only if $\kappa X(\nu)$ is a possibility capacity. This implies in particular that $\nu\in M_\cup X$ iff $\nu$ satisfy the simpler requirement that $\nu(A\cup B)=\max\{\nu(A),\nu(B)\}$. It is easy to check that $M_\cap X$ and $M_\cup X$ are closed subsets of $MX$.


The assignment $M$ extends to the capacity functor $M$ in the category of compacta, if the map $Mf:MX\to MY$ for a continuous map of compacta $f:X \to Y$ is defined by the formula $Mf(c)(F)=c(f^{-1}(F))$ where $c\in MX$ and $F$ is a closed subset of $X$. This functor was completed to the monad $\M=(M,\eta,\mu)$ \cite{NZ}, where the components of the  natural transformations are defined as follows: $\eta X(x)(F)=1$ if $x\in F$ and $\eta X(x)(F)=0$ if $x\notin F$;
$\mu X(\C)(F)=\sup\{t\in[0,1]\mid \C(\{c\in MX\mid c(F)\ge t\})\ge t\}$, where $x\in X$, $F$ is a closed subset of $X$ and $\C\in M^2(X)$ (see \cite{NZ} for more details).

It was shown in \cite{HN} that $M_\cup$ and $M_\cap$ are subfunctors of $M$ and if we take corresponding restrictions of the functions $\mu X$, we obtain submonads $\M_\cup$ and $\M_\cap$ of the monad $\M$.

The semicontinuity of capacities yields that we can change $\sup$ for  $\max$ in the definition of the map $\mu X$. More precisely, existing of $\max$ follows from Lemma 3.7 \cite{NZ}. For a closed set $F\subset X$ and for $t\in I$ put $F_t=\{c\in MX\mid c(F)\ge t\}$. We can rewrite the definition of the map $\mu X$ as follows $\mu X(\C)(F)=\max\{\C(F_t)\wedge t\mid t\in(0,1]\}$.

Let us remark that the operation $\wedge$ is a triangular norm. It seems naturally to consider instead $\wedge$ another triangular norm. Define the map $\mu^\bullet X:M^2 X\to MX$  by the formula $\mu^\bullet X(\C)(F)=\max\{\C(F_t)\cdot t\mid t\in(0,1]\}$. (Existing of $\max$ as well follows from Lemma 3.7 \cite{NZ}.)

\begin{proposition}\label{CE} The natural transformation $\mu^\bullet$ does not satisfy the property $\mu^\bullet\circ\mu^\bullet M=\mu^\bullet\circ M\mu^\bullet$.
\end{proposition}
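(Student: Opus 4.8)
The plan is to disprove the identity by producing an explicit compactum $X$, a closed set $F\subset X$, and a triple capacity $\A\in M^3X$ on which the two composites take different values. I would take the two-point compactum $X=\{1,2\}$ and $F=\{1\}$. Then $MX$ is homeomorphic to the square through $c\mapsto(c(\{1\}),c(\{2\}))$, and the level sets relevant to $F$ form the decreasing chain $F_s=\{c\in MX\mid c(\{1\})\ge s\}$. Unwinding the two definitions gives, on the one hand, $\mu^\bullet X(\mu^\bullet (MX)(\A))(F)=\max_{s,t\in(0,1]}\A(\{\C\in M^2X\mid \C(F_s)\ge t\})\cdot t\cdot s$, and on the other hand, using that $M\mu^\bullet X$ is the pushforward along $\mu^\bullet X$, $\mu^\bullet X(M\mu^\bullet X(\A))(F)=\max_{s\in(0,1]}\A(\{\C\in M^2X\mid \max_{r\in(0,1]}\C(F_r)\cdot r\ge s\})\cdot s$. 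These two closed forms are the objects to be separated.

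I would then choose $\A$ to be the capacity on $M^2X$ concentrated on two points $P_1,P_2\in M^2X$ and equal to $1$ exactly on those closed sets that contain both of them (the ``top-only'' necessity capacity). This is a legitimate upper-semicontinuous capacity, but it is \emph{not} a possibility capacity, and that is essential: on possibility capacities the law holds by the result of \cite{Ny2}. For this $\A$ the max-multiplication integral collapses to a minimum over the two support points, so the two displayed expressions reduce respectively to $\max_{s}\,s\cdot\min\{P_1(F_s),P_2(F_s)\}$ and $\min\{\max_s s\cdot P_1(F_s),\ \max_s s\cdot P_2(F_s)\}$, i.e.\ to the maximin and the minimax of one and the same array with entries $s\cdot P_i(F_s)$.

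It then remains to pick $P_1,P_2$ so that the two non-increasing functions $s\mapsto P_i(F_s)$ cross, which forces the strict inequality maximin $<$ minimax. I would take $P_1=\eta (MX)(e)$, the Dirac capacity at the point $e\in MX$ with $e(\{1\})=\tfrac12$, so that $P_1(F_s)=1$ for $s\le\tfrac12$ and $P_1(F_s)=0$ otherwise, and $P_2$ a capacity concentrated on the two Dirac points $\eta X(1),\eta X(2)$ of $MX$, tuned so that $P_2(F_s)=\tfrac12$ for every $s\in(0,1]$. A direct evaluation then yields $\max_s s\cdot\min\{P_1(F_s),P_2(F_s)\}=\tfrac14$ but $\min\{\max_s s\cdot P_1(F_s),\max_s s\cdot P_2(F_s)\}=\tfrac12$, so the two composites disagree on $F$, which is what we want.

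The routine part is unwinding the level-set definitions and checking that $\A$, $P_1$ and $P_2$ are honest upper-semicontinuous capacities; this is straightforward, since each is concentrated on finitely many isolated points, so condition~3 is verified by a simple separation argument. The conceptual crux, and the step I expect to carry the real content, is the reduction to a maximin versus minimax comparison: it makes transparent that the obstruction is precisely the failure of the max-multiplication integral to be maxitive for capacities that are not possibility capacities, which is exactly why the counterexample must be chosen outside $M_\cup X$.
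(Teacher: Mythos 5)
Your construction is correct and is essentially the paper's own counterexample: the same two-point space, the same singleton $F$, and an outer element of $M^3X$ concentrated on two second-level capacities whose level profiles $s\mapsto P_i(F_s)$ are respectively $1$ for $s\le\frac{1}{2}$ (and $0$ above) and constantly $\frac{1}{2}$, yielding the same separation $\frac{1}{4}$ versus $\frac{1}{2}$. The only difference is that you take the outer capacity to be the minimum of two Dirac capacities where the paper takes their average, which makes your maximin-versus-minimax reading of the failure somewhat more transparent but does not change the argument.
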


\begin{proof} Consider $X=\{a,b\}$, where $\{a,b\}$ is a two-point discrete space.  Define $\A_1\in M^2 X$ as follows $\A_1(\alpha)=1$ iff $\alpha\supset\{a\}_\frac{1}{2}$ and $\A_1(\alpha)=0$ otherwise for $\alpha\in\F(MX)$. Define $\A_2\in M^2 X$ as follows $\A_2(\alpha)=1$ iff $\alpha=MX$, $\A_2(\alpha)=\frac{1}{2}$ iff  $\alpha\supset\{a\}_1$ and $\A_1(\alpha)=0$ otherwise for $\alpha\in\F(MX)$. Now, define $\gimel\in M^3(X)$ by the formula  $\gimel(\Lambda)=\frac{1}{2}\eta M^2X(\A_1)(\Lambda)+\frac{1}{2}\eta M^2X(\A_2)(\Lambda)$ for $\Lambda\in\F(M^2X)$.

We have $\mu^\bullet X\circ M(\mu^\bullet X)(\gimel)(\{a\})=\max\{\gimel((\mu^\bullet X)^{-1}(\{a\}_t))\cdot t\mid t\in(0,1]\}$. It is easy to see that  $\mu^\bullet X(\A_1)=\mu^\bullet X(\A_2)=\frac{1}{2}$. Then $\gimel((\mu^\bullet X)^{-1}(\{a\}_\frac{1}{2}))\cdot \frac{1}{2}=1\cdot \frac{1}{2}= \frac{1}{2}$. Hence we obtain $\mu^\bullet X\circ\mu^\bullet MX(\gimel)(\{a\})\ge\frac{1}{2}$.

On the other hand $\mu^\bullet X\circ\mu^\bullet MX(\gimel)(\{a\})=\max\{\mu^\bullet MX(\gimel)(\{a\}_t))\cdot t\mid t\in(0,1]\}=\max\{\max\{\gimel((\{a\}_t)_s)\cdot s\mid s\in(0,1]\}\cdot t\mid t\in(0,1]\}$. The function $\delta(s,t)=\gimel((\{a\}_t)_s)$ is nonincreasing on both variables. We have $\delta(s,t)=0$ for each $(s,t)$ such that $s>\frac{1}{2}$ and $t>\frac{1}{2}$. Moreover  $\delta(1,\frac{1}{2})=\delta(\frac{1}{2},1)=\frac{1}{2}$. Hence $\mu^\bullet X\circ\mu^\bullet MX(\gimel)(\{a\})=\max\{\max\{\gimel((\{a\}_t)_s)\cdot s\mid s\in(0,1]\}\cdot t\mid t\in(0,1]\}=\frac{1}{4}$.
\end{proof}

\begin{remark}Since the triple  $\M^\bullet=(M,\eta,\mu^\bullet)$ does not form a monad, the problem of uniqueness of the monad $\M$ stated in  \cite{NZ} is still open.
\end{remark}

But things may turn out differently if we restrict the map $\mu^\bullet X$ to the set $M_\cup(M_\cup X)\subset M(MX)$. It is easy to see that for such restriction  we can consider the sets $A_t$ in the definition of the map  $\mu^\bullet X$ as subsets of $M_\cup X$. It was deduced from some general facts that the triple $\M_\cup^\bullet=(M_\cup,\eta,\mu^\bullet)$ is a monad \cite{Ny2}. For sake a completeness we give here a direct proof.

\begin{lemma}\label{cor} We have  $\mu^\bullet X(M_\cup(M_\cup X))\subset M_\cup X$ for each compactum $X$.
\end{lemma}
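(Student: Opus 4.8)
The plan is to verify directly that $\nu:=\mu^\bullet X(\C)$ satisfies the single characterizing identity $\nu(A\cup B)=\max\{\nu(A),\nu(B)\}$ for all closed $A,B\subset X$, where $\C\in M_\cup(M_\cup X)$. Since $\mu^\bullet X$ already maps into $MX$, the capacity $\nu$ is automatically upper-semicontinuous, and by the characterization of $M_\cup X$ recorded before the statement, membership $\nu\in M_\cup X$ is equivalent to exactly this equation. Monotonicity of $\nu$ (inherited from that of $\C$ together with the inclusion $F_t\subset G_t$ whenever $F\subset G$) already gives $\nu(A\cup B)\ge\max\{\nu(A),\nu(B)\}$ for free, so the whole content lies in the reverse inequality; in fact the computation below will deliver equality in one stroke.

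The key step, and the place where the hypothesis $\C\in M_\cup(M_\cup X)$ genuinely enters, is the set-theoretic identity
$$(A\cup B)_t=A_t\cup B_t\qquad(t\in(0,1]).$$
Here one uses that every $c$ ranging over $M_\cup X$ is a possibility capacity, whence $c(A\cup B)=\max\{c(A),c(B)\}$; thus $c(A\cup B)\ge t$ holds precisely when $c(A)\ge t$ or $c(B)\ge t$. This is exactly where restricting the domain to $M_\cup(M_\cup X)$, and therefore reading each $F_t$ as a subset of $M_\cup X$, is indispensable: for a general $c\in MX$ only the inclusion $A_t\cup B_t\subset(A\cup B)_t$ is available, and it may be strict. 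I would also record that each $F_t=\{c\in M_\cup X\mid c(F)\ge t\}$ is closed in $M_\cup X$, being the complement of the subbasic set $O_-(F,t)$, so that $A_t\cup B_t$ is again a legitimate closed argument for $\C$.

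With this identity secured the rest is a brief manipulation. Applying the possibility property of $\C$ to the closed sets $A_t,B_t\subset M_\cup X$ gives $\C((A\cup B)_t)=\C(A_t\cup B_t)=\max\{\C(A_t),\C(B_t)\}$. Multiplying by $t\ge 0$ and using that multiplication by a nonnegative scalar distributes over $\max$, I obtain $\C((A\cup B)_t)\cdot t=\max\{\C(A_t)\cdot t,\ \C(B_t)\cdot t\}$. Taking the maximum over $t\in(0,1]$ and interchanging the two maxima then yields
$$\nu(A\cup B)=\max_{t\in(0,1]}\max\{\C(A_t)\cdot t,\ \C(B_t)\cdot t\}=\max\{\nu(A),\nu(B)\},$$
as required.

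I expect the main obstacle to be not computational but the careful justification of the identity $(A\cup B)_t=A_t\cup B_t$ together with the closedness of $A_t,B_t$ in $M_\cup X$: this is precisely where the possibility structure of both $\C$ and the capacities $c$ is needed, and it is exactly the feature that was absent in the failed associativity computation of Proposition \ref{CE}. Once that identity is in place, the distributivity of the multiplication triangular norm over $\max$ closes the argument automatically.
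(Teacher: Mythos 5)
Your proof is correct and follows essentially the same route as the paper's: both rest on the identity $(A\cup B)_t=A_t\cup B_t$ for subsets of $M_\cup X$, then apply the possibility property of $\C$, distribute multiplication by $t$ over $\max$, and interchange the two maxima. Your additional remarks on the closedness of $F_t$ and on monotonicity are harmless elaborations of steps the paper leaves implicit.
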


\begin{proof}  Consider any $\A\in M_\cup(M_\cup X)$ and $B$, $C\in \F(X)$. Since $B_t$ and $C_t$ are subsets of $M_\cup X$, we have $(C\cup B)_t=C_t\cup B_t$.  Then $\mu^\bullet X(\A)(B\cup C)=\max\{\A((C\cup B)_t)\cdot t\mid t\in(0,1]\}=\max\{\A(C_t\cup B_t)\cdot t\mid t\in(0,1]\}=\max\{\max\{\A(C_t)\cdot t\mid t\in(0,1]\},\max\{\A(B_t)\cdot t\mid t\in(0,1]\}=\max\{\mu^\bullet X(\A)(B),\mu^\bullet X(\A)(C)\}$.
\end{proof}

We will use the notation $\mu^\bullet X$ also for the restriction $\mu^\bullet X|_{M^2_\cup X}$.

\begin{theorem}\label{mon} The triple $\M_\cup^\bullet=(M_\cup,\eta,\mu^\bullet)$ is a monad.
\end{theorem}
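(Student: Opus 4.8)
The plan is to verify the three monad identities for $\M_\cup^\bullet=(M_\cup,\eta,\mu^\bullet)$, taking for granted that $M_\cup$ is a functor and $\eta$ a natural transformation (both inherited from the monad $\M_\cup$), and that $\mu^\bullet X$ lands in $M_\cup X$ by Lemma \ref{cor}. First I would record that $\mu^\bullet$ is a natural transformation $M_\cup^2\to M_\cup$: for a map $f:X\to Y$ and a closed $F\subset Y$ one reads off $(M_\cup f)^{-1}(F_t)=(f^{-1}(F))_t$ straight from the definitions of $M_\cup f$ and of $F_t$, whence $M_\cup f\circ\mu^\bullet X=\mu^\bullet Y\circ M_\cup^2 f$. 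Continuity of each $\mu^\bullet X$ (needed both for this and for the preimage argument below) is checked as for $\mu X$ in \cite{NZ}, since $\cdot$ is a continuous triangular norm and the attainment-of-maximum fact (Lemma 3.7 of \cite{NZ}) applies verbatim.

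The two unit laws are short direct computations. For $\mu^\bullet X\circ\eta M_\cup X=\id$, note that $\eta M_\cup X(\nu)(F_t)$ is $1$ when $\nu(F)\ge t$ and $0$ otherwise, so $\mu^\bullet X(\eta M_\cup X(\nu))(F)=\max\{t\mid t\in(0,1],\ \nu(F)\ge t\}=\nu(F)$. For $\mu^\bullet X\circ M_\cup\eta X=\id$, one checks $(\eta X)^{-1}(F_t)=F$ for every $t\in(0,1]$, so that $\mu^\bullet X(M_\cup\eta X(\nu))(F)=\max\{\nu(F)\cdot t\mid t\in(0,1]\}=\nu(F)$.

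The real content is the associativity $\mu^\bullet X\circ\mu^\bullet M_\cup X=\mu^\bullet X\circ M_\cup\mu^\bullet X$ on $M_\cup^3 X$, which by Proposition \ref{CE} fails on all of $M$ and must therefore use the possibility structure. Fix $\C\in M_\cup^3 X$ and a closed $F\subset X$. Expanding the left-hand side twice gives $\max\{\C((F_t)_s)\cdot s\cdot t\mid s,t\in(0,1]\}$. For the right-hand side I would first identify the set $(\mu^\bullet X)^{-1}(F_t)=\{\A\mid\max_s\A(F_s)\cdot s\ge t\}=\bigcup_{s\in[t,1]}(F_s)_{t/s}$, which is closed because $\mu^\bullet X$ is continuous and $F_t$ is closed.

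The key step, and the one I expect to be the main obstacle, is to evaluate $\C$ on this union. Here the hypothesis $\C\in M_\cup^3 X$ is decisive: $\C$ is a possibility capacity on $M_\cup^2 X$, so by max-decomposability over the (closed) union one gets $M_\cup\mu^\bullet X(\C)(F_t)=\C\big(\bigcup_{s\in[t,1]}(F_s)_{t/s}\big)=\sup_{s\in[t,1]}\C((F_s)_{t/s})$. Substituting $r=t/s$ rewrites the right-hand side as $\sup\{\C((F_s)_r)\cdot r\cdot s\mid r,s\in(0,1]\}$, which coincides with the left-hand expression after the relabelling $t\mapsto s$, $s\mapsto r$. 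This is precisely where the failure of Proposition \ref{CE} is repaired: for a general $\C$ one only has $\C(\bigcup)\ge\sup\C(\cdot)$, and the inequality can be strict, whereas possibility forces equality and thus the two iterated expressions agree.
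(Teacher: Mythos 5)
Your proof is correct and rests on the same underlying fact as the paper's, namely that a possibility capacity is sup-decomposable over a family of closed sets whose union is closed; but you package the associativity step differently, and arguably more transparently. The paper proves the two inequalities $a\le b$ and $b\le a$ separately, each time invoking the point-attainment form of the possibility property (for a closed $C$ there is a point $\A_0\in C$ with $\gimel(C)=\gimel(\{\A_0\})$) and then using monotonicity to move that singleton into the appropriate superset. You instead exhibit $(\mu^\bullet X)^{-1}(F_t)$ explicitly as the closed union $\bigcup_{s\in[t,1]}(F_s)_{t/s}$, apply the possibility axiom for arbitrary closed unions once to get $\C\bigl((\mu^\bullet X)^{-1}(F_t)\bigr)=\sup_{s\in[t,1]}\C\bigl((F_s)_{t/s}\bigr)$, and reduce both iterated compositions to the common double supremum $\sup\{\C((F_s)_r)\cdot r\cdot s\mid r,s\in(0,1]\}$. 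This makes visible in one line why $\ge$ holds for arbitrary capacities and exactly where possibility is needed for $\le$, which is consistent with the counterexample of Proposition~\ref{CE}. Two small points: your identity needs the union to be closed, which you correctly reduce to continuity of $\mu^\bullet X$ --- a fact both you and the paper assert rather than verify (the paper's ``it is easy to check that $\eta$ and $\mu^\bullet$ are well-defined natural transformations''); and your supremum over $s\in[t,1]$ is in fact attained by upper semicontinuity, though equality of suprema is all you need. The unit laws and the naturality computation agree with the paper's.
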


\begin{proof}  It is easy to check that $\eta$ and $\mu^\bullet$ are well-defined natural transformations of corresponding functors. Let us check two monad properties.

Take any compactum $X$, $\nu\in M_\cup X$ and $A\in\F(X)$. Then we have $\mu^\bullet X\circ \eta M_\cup X(\nu)(A)=\max\{\eta \M_\cup X(\nu)(A_t)\cdot t\mid t\in(0,1]\}=\nu(A)$ and $\mu^\bullet X\circ M_\cup(\eta  X)(\nu)(A)=\max\{M_\cup(\eta  X)(\nu)(A_t)\cdot t\mid t\in(0,1]\}=\max\{\nu((\eta  X)^{-1}(A_t))\cdot t\mid t\in(0,1]\}=\max\{\nu(A)\cdot t\mid t\in(0,1]\}=\nu(A)$. We obtain the equality $\mu^\bullet\circ
M_\cup\eta=\mu^\bullet\circ\eta M_\cup=${\bf 1}$_{M_\cup}$.

Now, consider any $\gimel\in M_\cup^3(X)$ and $A\in\F(X)$. Put $a=\mu^\bullet X\circ M_\cup(\mu^\bullet X)(\gimel)(A)=\max\{\gimel((\mu^\bullet X)^{-1}(A_t))\cdot t\mid t\in(0,1]\}$ and $b=\mu^\bullet X\circ\mu^\bullet M_\cup X(\gimel)(\{a\})=$ \newline $=\max\{\mu^\bullet M_\cup X(\gimel)(A_t))\cdot t\mid t\in(0,1]\}=\max\{\max\{\gimel((A_t)_s)\cdot s\mid s\in(0,1]\}\cdot t\mid t\in(0,1]\}$.

There exists $t_0\in(0,1]$ such that $a=\gimel((\mu^\bullet X)^{-1}(A_{t_0}))\cdot t_0$. We have $(\mu^\bullet X)^{-1}(A_{t_0})=\{\A\in M_\cup^2(X)\mid\mu^\bullet X(\A)\ge t_0\}=\{\A\in M_\cup^2(X)\mid$ there exists $c\in(0,1]$ such that $\A(A_{c})\cdot c\ge t_0\}=\{\A\in M_\cup^2(X)\mid$ there exists $c\in(0,1]$ such that $\A(A_{c})\ge \frac{t_0}{c}\}$. Since $\gimel$ is a possibility capacity, there exists $\A_0\in M_\cup^2(X)$ and $c_0\in(0,1]$ such that $\A_0(A_{c_0})\ge \frac{t_0}{c_0}$ and $\gimel((\mu^\bullet X)^{-1}(A_{t_0}))=\gimel(\{\A_0\})$. But then we have $a\le\gimel((A_{c_0})_{\frac{t_0}{c_0}})\cdot t_0=\gimel((A_{c_0})_{\frac{t_0}{c_0}})\cdot\frac{t_0}{c_0}\cdot c_0\le b$.

On the other hand choose $p_0$, $z_0\in(0,1]$ such that $b=\gimel((A_{p_0})_{z_0})\cdot p_0\cdot z_0$. Since $\gimel$ is a possibility capacity,
there exists $\B_0\in (A_{p_0})_{z_0}$ such that $\gimel((A_{p_0})_{z_0})=\gimel(\{\B_0\})$. We have $\B_0(A_{p_0})\ge z_0$, hence $\mu^\bullet X(\B_0)(A)\ge z_0\cdot p_0$. Then we obtain $b=\gimel(\{\B_0\})\cdot p_0\cdot z_0\le\gimel((\mu^\bullet X)^{-1}(A_{p_0\cdot z_0}))\cdot p_0\cdot z_0\le a$.
\end{proof}

\section{Functional representation of the monad $\M_\cup^\bullet$}

A monad $\F=(F,\eta,\mu)$ is called an {\it IL-monad} if there exists
a map $\xi:FI\to I$ such that the pair $(I,\xi)$ is an
$\F$-algebra and
for each $X\in \Comp$ there exists a point-separating family of
$F$-algebras morphisms $\{f_\alpha:(FX,\mu X)\to(I,\xi)
\mid\alpha\in A\}$ \cite{Ra3}.

There was defined a monad $\V_I$ in \cite{Ra3}, which is universal in the class of IL-monads. By $V_IX$ we denote the power $I^{C(X,I)}$. For a map $\phi\in C(X,I)$
we denote by $\pi_\phi$ or $\pi(\phi)$ the
corresponding projection $\pi_\phi:V_IX\to I$. For each map $f:X\to Y$
we define the map $V_If:V_IX\to V_IY$ by the formula
$\pi_\phi\circ V_If=\pi_{\phi\circ f}$ for $\phi\in C(Y,I)$.
For a compactum $X$ we define components $hX$ and $mX$ of natural transformations by $\pi_\phi\circ hX=\phi$ and $\pi_\phi\circ m X=\pi(\pi_\phi)$ for all $\phi\in C(X,I)$). The triple $\V_I=(V_I,h,m)$ forms a monad in the
category $\Comp$ and for each monad $\F$ there exists a monad embedding $l:\F\to\V_I$ if and only if $\F$ is IL-monad \cite{Ra3}. Moreover, for a compactum $X$ the map $lX:FX\to V_IX$ is defined by the conditions $\pi_\phi\circ lX=\xi\circ F\phi$ for each $\psi\in C(X,I)$.

\begin{theorem} The monad $\M_\cup^\bullet$ is an IL-monad.
\end{theorem}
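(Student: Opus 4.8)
The plan is to exhibit the IL-monad structure explicitly, taking the max-product fuzzy integral as the algebra map on $I$. I would define $\xi\colon M_\cup I\to I$ by
$$\xi(\nu)=\max\{t\cdot\nu([t,1])\mid t\in(0,1]\},$$
that is, $\xi(\nu)$ is the max-product integral of the identity $\id_I\in C(I,I)$ against $\nu$ (existence of the maximum again follows from Lemma 3.7 of \cite{NZ}, and continuity of $\xi$ is routine). The first thing to check is the unit law $\xi\circ\eta I=\id_I$, which is immediate: for $x\in I$ the capacity $\eta I(x)$ equals $1$ exactly on the closed sets containing $x$, so $\eta I(x)([t,1])=1$ iff $t\le x$, whence $\xi(\eta I(x))=\max\{t\mid t\le x\}=x$.

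The substantial identity is the algebra associativity $\xi\circ\mu^\bullet I=\xi\circ M_\cup\xi$ on $M_\cup^2 I$, and this is the main obstacle. Unwinding the definitions, for $\C\in M_\cup^2 I$ the left-hand side equals $\max\{s\cdot t\cdot\C(\{\nu\mid\nu([s,1])\ge t\})\mid s,t\in(0,1]\}$, while the right-hand side equals $\max\{r\cdot\C(\{\nu\mid\xi(\nu)\ge r\})\mid r\in(0,1]\}$, where $\{\nu\mid\xi(\nu)\ge r\}=\{\nu\mid\exists u\ u\cdot\nu([u,1])\ge r\}$. I would prove the two inequalities exactly as in the proof of Theorem~\ref{mon}. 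For ``$\le$'' I only need monotonicity of $\C$: the set $\{\nu\mid\nu([s,1])\ge t\}$ is contained in $\{\nu\mid\exists u\ u\cdot\nu([u,1])\ge st\}$ (take $u=s$), so each term with scale $st$ is dominated by the right-hand term with $r=st$. For ``$\ge$'' I would use that $\C$ is a \emph{possibility} capacity: the closed set $\{\nu\mid\xi(\nu)\ge r\}$ has $\C$-value attained at a single $\nu_0$, for which some $u_0$ satisfies $\nu_0([u_0,1])\ge r/u_0$; this returns the right-hand term to a left-hand term with $s=u_0$, $t=r/u_0$. This Fubini-type computation for the max-product integral is structurally identical to the argument already carried out for $\mu^\bullet$ in Theorem~\ref{mon}.

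Granting that $(I,\xi)$ is an $\M_\cup^\bullet$-algebra, I would take as the separating family the maps $f_\phi\colon M_\cup X\to I$ for $\phi\in C(X,I)$, defined by $f_\phi=\xi\circ M_\cup\phi$; concretely $f_\phi(\nu)=\max\{t\cdot\nu(\{x\mid\phi(x)\ge t\})\mid t\in(0,1]\}$, the max-product integral of $\phi$. That each $f_\phi$ is an $\M_\cup^\bullet$-algebra morphism then needs no new work: naturality of $\mu^\bullet$ gives $M_\cup\phi\circ\mu^\bullet X=\mu^\bullet I\circ M_\cup^2\phi$, and combining this with the algebra law yields $f_\phi\circ\mu^\bullet X=\xi\circ\mu^\bullet I\circ M_\cup^2\phi=\xi\circ M_\cup\xi\circ M_\cup^2\phi=\xi\circ M_\cup f_\phi$, which is precisely the morphism condition $\xi\circ M_\cup f_\phi=f_\phi\circ\mu^\bullet X$.

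Finally I would verify that $\{f_\phi\mid\phi\in C(X,I)\}$ separates points of $M_\cup X$. Given $\nu_1\ne\nu_2$, choose a closed $F\subset X$ and a number $a$ with $\nu_1(F)<a<\nu_2(F)$. By the upper-semicontinuity property~3 applied to $\nu_1$ there is an open $O\supset F$ with $\nu_1(B)<a$ for every compactum $B\subset O$; by Urysohn's lemma pick $\phi\in C(X,I)$ with $\phi|_F=1$ and $\phi|_{X\setminus O}=0$. Then for every $t\in(0,1]$ the closed set $\{x\mid\phi(x)\ge t\}$ is a compactum contained in $O$, so $\nu_1(\{x\mid\phi(x)\ge t\})<a$ and hence $f_\phi(\nu_1)\le a$; on the other hand $F\subset\{x\mid\phi(x)\ge 1\}$ gives $f_\phi(\nu_2)\ge\nu_2(F)>a$. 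Thus $f_\phi(\nu_1)<f_\phi(\nu_2)$, the family is point-separating, and $\M_\cup^\bullet$ is therefore an IL-monad. This last step is standard and presents no difficulty beyond a careful application of property~3.
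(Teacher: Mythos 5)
Your proof is correct and follows the same overall strategy as the paper: the same algebra map $\xi(\nu)=\max\{t\cdot\nu([t,1])\mid t\in(0,1]\}$, the same separating family $f_\phi=\xi\circ M_\cup\phi$, and the algebra associativity verified by the same two-inequality, Fubini-type computation (monotonicity of $\C$ for one direction, the possibility property of $\C$ for the other) that the paper dispatches with ``the same but simpler arguments as in the proof of Theorem~\ref{mon}''; your explicit derivation of the morphism condition from naturality plus the algebra law is exactly what the paper leaves as ``easy to check.'' The one place you genuinely diverge is the point-separation step: the paper uses that $\nu$ is a possibility capacity, forming the closed set $B=\{x\mid\nu(\{x\})\ge t\}$, observing $B\cap A=\emptyset$, and separating $A$ from $B$ by a Urysohn function, whereas you invoke only axiom~3 of upper semicontinuity to produce an open $O\supset F$ on whose compact subsets $\nu_1$ stays below $a$. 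Your variant is a little cleaner and would work verbatim for arbitrary upper-semicontinuous capacities rather than only possibility ones; the paper's variant buys nothing extra here beyond the explicit bound $f_\phi(\nu)\le t$. Either way the conclusion $f_\phi(\nu_1)\le a<f_\phi(\nu_2)$ is reached correctly, so the family separates points and the theorem follows.
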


\begin{proof}  Define the map $\xi:M_\cup I\to I$ by the formula $\xi(\nu)=\max\{\nu([t,1]\cdot t\mid t\in(0,1]\}$. We can check that the pair $(I,\xi)$ is an $\M_\cup^\bullet$-algebra by the same but simpler arguments as in the proof of Theorem \ref{mon}.

Consider any compactum $X$ and two distinct capacities $\nu$, $\beta\in M_\cup X$. Then there exists $A\in\F(X)$ such that $\nu(A)\ne\beta(A)$. We can suppose that $\nu(A)<\beta(A)$. Since $\nu$ and $\beta$ are possibility capacities, there exist $a$, $b\in A$ such that $\nu(\{a\})=\nu(A)$ and $\beta(\{b\})=\beta(A)$.  Choose a point $t\in (\nu(A),\beta(A))$. Put $B=\{x\in X\mid \nu(\{x\})\ge t\}$. Since $\nu$ is a possibility capacity and $\nu(X)=1$, $B$ is not empty. Since $\nu$ is upper semicontinuous, $B$ is  closed. Evidently, $B\cap A=\emptyset$. Choose a function $\varphi\in C(X,I)$ such that $\varphi(B)\subset\{0\}$ and $\varphi(A)\subset\{1\}$. Then $\pi_\varphi\circ lX(\nu)=\xi\circ M_\cup\varphi(\nu)=\max\{M_\cup\varphi(\nu)([s,1]\cdot s\mid s\in(0,1]\}=\max\{\nu(\varphi^{-1}[s,1])\cdot s\mid s\in(0,1]\}\le t<\beta(A)\le\beta(\varphi^{-1}\{1\})\cdot 1\le\pi_\varphi\circ lX(\beta)$. It is easy to check that $\pi_\phi\circ lX=\xi\circ \M_\cup\phi:\M_\cup X\to I$ is an $\M_\cup^\bullet$-algebras morphism.
\end{proof}

Hence we obtain an monad embedding $l:\M_\cup^\bullet\to \V_I$ such that $\pi_\varphi\circ lX(\nu)=\max\{\nu(\varphi^{-1}[s,1])\cdot s\mid s\in(0,1]\}$ for each compactum $X$, $\nu\in M_\cup X$ and $\varphi\in C(X,I)$.

Let $X$ be any compactum. For any $c\in I$ we shall denote  by $c_X$ the
constant function on $X$ taking the value $c$. Following the notations of idempotent mathematics (see e.g., \cite{MS}) we use the
notation $\oplus$  in $I$ and $C(X,I)$ as an alternative for $\max$.
 We will use the notation $\nu(\varphi)=\pi_\varphi\circ lX(\nu)$ for $\nu\in V_IX$ and $\varphi\in C(X,I)$.

Consider the subset $SX\subset V_IX$ consisting of all functionals $\nu$ satisfying the following conditions

\begin{enumerate}
\item $\nu(1_X)=1$;
\item $\nu(\lambda\cdot\varphi)=\lambda\cdot\nu(\varphi)$ for each $\lambda\in I$ and $\varphi\in C(X,I)$;
\item $\nu(\psi\oplus\varphi)=\nu(\psi)\oplus\nu(\varphi)$ for each $\psi$, $\varphi\in C(X,I)$.
\end{enumerate}

Let us remark that properties 1 and 2 yield that $\nu(c_X)=c$ for each $\nu\in SX$ and $c\in I$.

\begin{theorem} $lX(M_\cup X)=SX$.
\end{theorem}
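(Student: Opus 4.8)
I prove the set equality by establishing the two inclusions separately. The inclusion $lX(M_\cup X)\subseteq SX$ is the routine direction. Given $\nu\in M_\cup X$, I would verify properties 1--3 for the functional $\varphi\mapsto\max\{\nu(\varphi^{-1}[s,1])\cdot s\mid s\in(0,1]\}$ directly from the explicit formula: property 1 follows since $(1_X)^{-1}[s,1]=X$ and $\nu(X)=1$; property 2 follows by the substitution $u=s/\lambda$ together with $(\lambda\varphi)^{-1}[s,1]=\varphi^{-1}[s/\lambda,1]$ (and the degenerate case $\lambda=0$ using $\nu(\emptyset)=0$); property 3 is where the hypothesis $\nu\in M_\cup X$ enters, via $(\psi\oplus\varphi)^{-1}[s,1]=\psi^{-1}[s,1]\cup\varphi^{-1}[s,1]$ and the possibility identity $\nu(A\cup B)=\max\{\nu(A),\nu(B)\}$.

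For the reverse inclusion $SX\subseteq lX(M_\cup X)$, the plan is to reconstruct a capacity from the functional. Given $\nu\in SX$, I would set $c(F)=\inf\{\nu(\varphi)\mid \varphi\in C(X,I),\ \varphi|_F\equiv 1\}$ for each $F\in\F(X)$. Before checking the axioms I record three easy consequences of 1--3: $\nu$ is monotone (if $\varphi\le\psi$ then $\psi=\psi\oplus\varphi$, so $\nu(\psi)=\nu(\psi)\oplus\nu(\varphi)\ge\nu(\varphi)$), $\nu(0_X)=0$ (property 2 with $\lambda=0$), and $\nu(c_X)=c$ (noted after the definition of $SX$). Then $c(\emptyset)=0$, $c(X)=\nu(1_X)=1$, and monotonicity of $c$ are immediate; the possibility property $c(A\cup B)=\max\{c(A),c(B)\}$ follows by taking near-optimal test functions $\varphi_A,\varphi_B$ and using $\nu(\varphi_A\oplus\varphi_B)=\nu(\varphi_A)\oplus\nu(\varphi_B)$. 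For upper semicontinuity (property 3 of a capacity), if $c(F)<a$ I would pick $a'$ with $c(F)<a'<a$, a test function $\varphi$ with $\varphi|_F\equiv1$ and $\nu(\varphi)<a'$, and then $\delta>0$ with $a'/(1-\delta)<a$; on the open set $O=\{\varphi>1-\delta\}\supset F$, for any closed $B\subset O$ the function $\psi=\min(\varphi/(1-\delta),1)$ satisfies $\psi|_B\equiv1$ and $(1-\delta)\psi\le\varphi$, whence $(1-\delta)\,c(B)\le(1-\delta)\nu(\psi)=\nu((1-\delta)\psi)\le\nu(\varphi)<a'$, giving $c(B)<a$.

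It then remains to prove the representation identity $\nu(\varphi)=\max\{c(\varphi^{-1}[s,1])\cdot s\mid s\in(0,1]\}$, which is exactly $lX(c)=\nu$ and finishes the proof. For the lower bound, fix $s\in(0,1]$ and note that $\psi_s=\min(\varphi/s,1)$ is $1$ on $F_s=\varphi^{-1}[s,1]$ and satisfies $s\psi_s\le\varphi$; hence $s\,c(F_s)\le s\,\nu(\psi_s)=\nu(s\psi_s)\le\nu(\varphi)$, and taking the supremum over $s$ gives ``$\ge$''.

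The upper bound is the main obstacle, and I would handle it by a multiplicative layering that keeps all errors multiplicative, so that only the clean comparison ``$\lambda\varphi\le\psi\Rightarrow\lambda\nu(\varphi)\le\nu(\psi)$'' (homogeneity plus monotonicity) is needed. Fix $q\in(0,1)$, an integer $N$, the levels $t_k=q^k$ and level sets $F_k=\varphi^{-1}[t_k,1]$ for $k=0,\dots,N$, and choose $\psi_k$ with $\psi_k|_{F_k}\equiv1$ and $\nu(\psi_k)<c(F_k)+\e$. Setting $G=\bigoplus_{k}t_k\psi_k$, a pointwise check shows $q\varphi\le G\oplus q^N_X$: at a point where $t_k\le\varphi(x)<t_{k-1}$ we have $\psi_k(x)=1$, so $G(x)\ge t_k=q\,t_{k-1}>q\varphi(x)$, while points with $\varphi(x)<q^N$ are absorbed by the constant $q^N_X$. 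Applying homogeneity, monotonicity and property 3 yields $q\,\nu(\varphi)=\nu(q\varphi)\le\nu(G)\oplus q^N=\bigl(\max_k t_k\nu(\psi_k)\bigr)\oplus q^N\le\bigl(\max_k t_k c(F_k)+\e\bigr)\oplus q^N$. Since $\max_k t_k c(F_k)\le\sup_s s\,c(F_s)$, letting $N\to\infty$ and $\e\to0$ and then $q\to1$ gives $\nu(\varphi)\le\max_s c(F_s)\cdot s$. The delicate points are precisely this multiplicative discretization (choosing geometric levels rather than uniform ones, so the rounding error is a factor $q$ rather than an additive $\e$ that the max-times structure cannot absorb) and the treatment of values of $\varphi$ near $0$ via the bottom constant $q^N_X$.
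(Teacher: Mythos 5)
Your proposal is correct and follows essentially the same route as the paper: the forward inclusion by direct computation, the inverse capacity defined as $\inf$ of $\nu(\varphi)$ over test functions identically $1$ on the set, the same test functions $\min(\varphi/s,1)$ for the lower bound, and the same geometric layering (levels $q^k$ with a constant $q^N_X$ at the bottom) for the upper bound. The only difference is cosmetic: the paper runs the layering argument by contradiction with a single intermediate value $a$, while you give a direct estimate with explicit multiplicative errors $q$, $\e$, $q^N$ that are removed in the limit.
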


\begin{proof}  Consider any $\nu\in M_\cup X$. Put $\upsilon=lX(\nu)$. Then we have $\upsilon(1_X)=$ \newline $=\max\{\nu((1_X)^{-1}[s,1])\cdot s\mid s\in(0,1]\}=\max\{\nu(X)\cdot s\mid s\in(0,1]\}=1$.

Take any $c\in I$ and $\varphi\in C(X,I)$. For $c=0$ the Property 2 is trivial. For $c>0$  we have $\upsilon(c\varphi)=\max\{\nu((c\varphi)^{-1}[s,1])\cdot s\mid s\in(0,1]\}=\max\{\nu(\varphi^{-1}[\frac sc,1])\cdot \frac sc\mid s\in(0,1]\}\cdot c=c\cdot\upsilon(\varphi)$.

Consider any $\psi$ and $\varphi\in C(X,I)$. We have $\upsilon(\psi\oplus\varphi)=\max\{\nu((\psi\oplus\varphi)^{-1}[s,1])\cdot s\mid s\in(0,1]\}=\max\{\nu(\psi^{-1}[s,1]\cup\varphi^{-1}[s,1])\cdot s\mid s\in(0,1]\}=\max\{(\nu(\psi^{-1}[s,1])\oplus\nu(\varphi^{-1}[s,1]))\cdot s\mid s\in(0,1]\}=\upsilon(\psi)\oplus\upsilon(\varphi)$. We obtained  $lX(M_\cup X)\subset SX$.

Take any $\upsilon\in SX$. For $A\in\F(X)$ put $\Upsilon_A=\{\varphi\in C(X,I)\mid \varphi(a)=1$ for each $a\in A\}$. Define $\nu:\F(X)\to I$ as follows $\nu(A)=\inf\{\upsilon(\varphi)\mid \varphi\in \Upsilon_A\}$ if $A\ne\emptyset$ and $\nu(\emptyset)=0$. It is easy to see that $\nu$ satisfies Conditions 1 and 2 from the definition of capacity.

Let $\nu(A)<\eta$ for some $\eta\in I$ and $A\in\F(X)$. Then there exists $\varphi\in \Upsilon_A$ such that $\upsilon(\varphi)=\chi<\eta$. Choose $\varepsilon>0$ such that $(1+\varepsilon)\chi<\eta$. Put $\delta=\frac{1}{1+\varepsilon}$ and $\psi=\min\{\delta_X,\varphi\}$. Then $\upsilon(\psi)\le\upsilon(\varphi)=\chi$ and $\upsilon((1+\varepsilon)\psi)\le(1+\varepsilon)\chi<\eta$. Put $U=\varphi^{-1}(\delta,1]$. Evidently $U$ is an open set and $U\supset A$. But for each compact $K\subset U$ we have $ (1+\varepsilon)\psi\in \Upsilon_K$. Hence $\nu(K)<\eta$.

Finally take any $A$, $B\in\F(X)$. Evidently $\nu(A\cup B)\ge \nu(A)\oplus\nu(B)$. Suppose $\nu(A\cup B)>\nu(A)\oplus\nu(B)$. Then there exists $\varphi\in \Upsilon_A$ and $\psi\in \Upsilon_B$ such that $\nu(A\cup B)>\upsilon(\varphi)\oplus\upsilon(\psi)=\upsilon(\varphi\oplus\psi)$. But $\varphi\oplus\psi\in \Upsilon_{A\cup B}$ and we obtain a contradiction. Hence $\nu\in M_\cup X$.

Let us show that $lX(\nu)=\upsilon$. Take any $\varphi\in C(X,I)$. Denote $\varphi_t=\varphi^{-1}[t,1]$. Then $lX(\nu)(\varphi)=\max\{\inf\{\upsilon(\chi)\mid \chi\in \Upsilon_{\varphi_t}\}\cdot t\mid t\in(0,1]\}=\max\{\inf\{\upsilon(t\chi)\mid \chi\in \Upsilon_{\varphi_t}\}\mid t\in(0,1]\}$. For each $t\in(0,1]$ put $\chi_t=\min\{\frac{1}{t}\varphi,1_X\}\in \Upsilon_{\varphi_t}$. We have $t\chi\le\varphi$, hence  $\upsilon(t\chi)\le\upsilon(\varphi)$. Then we have $\inf\{\upsilon(t\chi)\mid \chi\in \Upsilon_{\varphi_t}\}\le\upsilon(\varphi)$ for each $t\in(0,1]$, hence $lX(\nu)(\varphi)\le\upsilon(\varphi)$.

Suppose $lX(\nu)(\varphi)<\upsilon(\varphi)$. Choose any $a\in(lX(\nu)(\varphi),\upsilon(\varphi))$. Then for each $t\in(0,1]$ there exists $\chi_t\in\Upsilon_{\varphi_t}$ such that $\upsilon(t\chi_t)<a$. Choose $\varepsilon>0$ such that $(1+\varepsilon)a<\upsilon(\varphi)$. Put $\delta=\frac{1}{1+\varepsilon}$. Choose $n\in\N$ such that $\delta^n<\upsilon(\varphi)$. Put $\psi_{n+1}=\delta^n_X$ and $\psi_{i}=\delta^{i-1}\chi_{\delta^i}$ for $i\in\{1,\dots,n\}$. We have $\upsilon(\psi_{i})<\upsilon(\varphi)$ for each $i\in\{1,\dots,n+1\}$.  Put $\psi=\oplus_{i=1}^{n+1}\psi_{i}$. Then  $\upsilon(\psi)=\oplus_{i=1}^{n+1}\upsilon(\psi_{i})<\upsilon(\varphi)$. On the other hand $\varphi\le\psi$ and we obtain a contradiction.
\end{proof}

Hence we obtain, in fact, that the monad $\M_\cup^\bullet$ is isomorphic to a submonad of $\V_I$ with functorial part acting on compactum $X$ as $SX$. Let us remark that this monad is one of monads generated by t-norms considered by Zarichnyi  \cite{Za}. Thus the following question seems to be natural: can we generalize the results of this paper to any continuous t-norms?

\end{document}